\numberwithin{equation}{section}
\newtheorem{theorem}{Theorem}
\newtheorem{proposition}{Proposition}
\numberwithin{theorem}{section}
\numberwithin{corollary}{section}
\numberwithin{lemma}{section}
\numberwithin{definition}{section}
\numberwithin{proposition}{section}
\numberwithin{remark}{section}
\def\F{{\bf{F}}}
\def\al{\alpha}
\def\re{\mathbb{R}}
\def\N{\mathbb N}
\def\R{\mathbb R}
\def\N{\mathbb{N}}
\def\pd{\partial}
\def\ol{\overline}
\def\la{\lambda}
\def\disp{\displaystyle}
\def\({\left(}
\def\){\right)}
\def\phi{\varphi}
\begin{document}
 
\title[Finsler Hardy-Kato's inequality]{Finsler Hardy-Kato's inequality }

\author[A. Alvino]{A. Alvino$^1$}
\author[A. Ferone]{A. Ferone$^2$}
\author[A. Mercaldo]{A. Mercaldo$^1$}
\author[F. Takahashi]{F. Takahashi$^3$}
\author[R. Volpicelli]{R. Volpicelli$^1$}

\date{\today}

\setcounter{footnote}{1}
\footnotetext{Universit\`a di Napoli Federico II, Dipartimento di Matematica e Applicazioni ``R. Caccioppoli'',
Complesso Monte S. Angelo, via Cintia, 80126 Napoli, Italy;\\
e-mail: {\tt angelo.alvino@unina.it,   mercaldo@unina.it, volpicelli@unina.it}}

\setcounter{footnote}{2}
\footnotetext{Universit\`a della Campania {\it Vanvitelli}, Dipartimento di Matematica e Fisica, via Lincoln 5, 81100 Caserta, Italy;\\
e-mail:{\tt adele.ferone@unicampania.it}}

\setcounter{footnote}{3}
\footnotetext{Department of Mathematics, Osaka City University,
3-3-138 Sugimoto, Sumiyoshi-ku, Osaka 558-8585, Japan. \\
e-mail:{\tt futoshi@sci.osaka-cu.ac.jp}}

\begin{abstract} 
\medskip
We prove  an improved version of the trace-Hardy inequality, so-called Kato's inequality, on the half-space in Finsler context.
The resulting inequality extends the former one obtained by \cite{AFV} in Euclidean context.
Also we discuss the validity of the same type of inequalities on open cones.

\vspace{1em}\noindent
{\sl Key words: trace-Hardy inequality, Finsler norm, hyper-geometric function.}  
\rm 

\vspace{1em}\noindent
{\sl 2010 Mathematics Subject Classification: 26D10, 35J20, 46E35.}  
\rm 
\end{abstract}
\maketitle

\bigskip

\section{Introduction}
\label{section:Introduction}

In the last decades interests in Finsler geometry have increased due to its possible applications in different contexts of mathematics, 
such as anisotropic eigenvalue problems and anisotropic evolution problems. 
One of the basic idea is to endow the space $\R^N$ with the distance obtained by a Finsler metric and to extend classical results 
to such a new geometrical context.

In this paper we are interested in the trace-Hardy inequality, so-called Hardy-Kato's inequality, on the half-space $\R^N_+=\R^N\times[0,+\infty)$ 
endowed with a Finsler norm.
That is Hardy inequality for Sobolev functions defined on $\R^N_+$ with non-zero trace on the boundary of $\R^N_+$ in Finsler context. 
More generally we also treat Hardy-Kato's inequality on open cones endowed with Finsler norm.

The interest in the theory of boundary trace for Sobolev functions and Hardy's inequalities arises from the possible applications 
to boundary value problems for PDEs and nonlinear analysis. 
They have been developed by various authors via different methods in different settings: 
Here we just recall some recent papers and references therein \cite{FMT}, \cite{AVV}, \cite{AFV}, \cite{VH}.

Let us begin by discussing the case of the half-space $\R^N_+$. 
In \cite{AFV} a sharp trace-Hardy inequality has been proved:
For any $2 \le \beta < N$ there exists a positive constant $K(N, \beta)$ such that
$\displaystyle\lim_{\beta \to N} K(N, \beta) = 0$ and
\begin{equation}
\label{HardyAFV}
	K(N,\beta)
	\int_{\pd {\re^N_{+}}}\frac {u^2}{|x|} \,dx + \frac {(\beta-2)^2}{4}\int_ {{\re^N_{+}}}\frac {u^2}{|x|^2+t^2} dx dt
	\le \int_ {{\re^N_{+}}} |\nabla u|^2 \, dx dt
\end{equation}
holds for any function $u$ in the Sobolev space $W^{1,2}(\R^{N}_+)$. 
The constant $K(N, \beta)$ is computed explicitly as
\begin {equation}
\label{constant}
	K(N,\beta) = 2\frac{\Gamma \( \frac{N+\beta}{4}-\frac{1}{2} \) \Gamma \( \frac{N-\beta}{4}+\frac{1}{2} \)}
		{\Gamma \( \frac{N+\beta}{4}-1 \) \Gamma \( \frac{N-\beta}{4} \)},
\end{equation} 
and both constants $K(N, \beta)$ and $\frac{(\beta-2)^2}{4}$ in \eqref{HardyAFV} are sharp. 
Inequality \eqref{HardyAFV} interpolates the classical Kato's inequality, which corresponds to $\beta=2$ in \eqref{HardyAFV}, 
and the Hardy inequality on $\re^N_{+}$ obtained by letting $\beta$ go to $N$.

As pointed out, our goal is to prove the trace-Hardy inequality \eqref{HardyAFV} in a more general geometrical framework. 
We consider $\R^{N}_{+}$ as the product space $\R^{N-1}\times \R_{+}$ 
and we endow it with a natural product metric generated by a Finsler norm $H^0$ on $\R^{N-1}$ and the usual Euclidian norm on $\R$. 
Denote each point $z \in \R^N_{+}$ as a couple $(x,t)$ where $x \in \R^{N-1}$, $t \in \R_{+}$, 
and consider the norm $\Phi^0$:
\begin{equation}
\label{Phi0}
	\Phi^0(z) = \Phi^0(x,t)=\sqrt{[H^0(x)]^2+t^2}, \quad \>\> z=(x,t)\in \R^{N-1}\times \R_{+}. 
\end{equation}
The {\sl dual norm} $\Phi$ of $\Phi^0$: 
\begin{equation}
\label{Phi}
	\Phi(\eta)=\Phi(\xi,t)=\sqrt{[H(\xi)]^2+t^2}, \quad \>\>\eta=(\xi,t) \in \R^{N-1} \times \R_{+}
\end{equation}
is automatically introduced to evaluate the length of the gradient of a function, 
where $H = H(\xi)$ denotes the dual norm of $H^0 = H^0(x)$ defined on $\R^{N-1}$.
We refer to \S \ref{section:notations} for the definitions, notations, and main properties of a Finsler norm.

%
%
Our first main result is the following: 
\begin{theorem}
\label{Theorem:main}
Assume $N \ge 3$. 
Let $H$ be a Finsler norm on $\R^{N-1}$ and let $\Phi(\xi,t)$ be the Finsler norm in $\R_{+}^N$ defined by \eqref{Phi}.
Then for any $u \in W^{1,2}(\re^N_{+})$ and $2 \le \beta < N$, 
\begin{equation}
\label{Finsler-Kato}
	 K(N,\beta)
	 \int_{\pd {\re^N_{+}}} \frac{u^2(x,0)}{\Phi^0(x,0)} dx
	 \le \int_{{\re^N_{+}}} \Phi^2(\nabla u) dz - \frac{(\beta-2)^2}{4} \int_{{\re^N_{+}}} \frac{u^2(z)}{[\Phi^0(z)]^2} dz
\end{equation}
holds where $\Phi^0$ is defined by \eqref{Phi0}, $\nabla u(z) = ( \nabla_x u, \frac{\pd u}{\pd t})$, $dz = dx dt$ for $z = (x, t) \in \re^N_{+}$,
and $K(N, \beta)$ is defined in \eqref{constant}.
$K(N,\beta)$ is sharp in the sense that
\begin{equation}
\label{optimal}
	K(N, \beta) 
	= \inf_{u \in W^{1,2}(\R^N_+), u \ne 0} 
	\frac{\displaystyle\int_{{\re^N_{+}}} \Phi^2(\nabla u) \, dz - \frac{(\beta-2)^2}{4} \displaystyle\int_{{\re^N_{+}}} \frac{u^2(z)}{[\Phi^0(z)]^2} dz}
	{\displaystyle\int_{\pd {\re^N_{+}}} \frac{u^2(x,0)}{\Phi^0(x,0)} dx}
\end{equation}
holds true.
\end{theorem}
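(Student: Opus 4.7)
The plan is to prove \eqref{Finsler-Kato} via a substitution argument $u = \phi v$ with a positive auxiliary function $\phi$, mirroring the Euclidean strategy of \cite{AFV}. Setting $\psi(\xi) := \tfrac{1}{2}\Phi^{2}(\xi)$, the convexity of $\psi$ together with Euler's identity $\nabla_{\xi}\psi(\xi)\cdot \xi = 2\psi(\xi)$ yields the pointwise bound
\begin{equation*}
\Phi^{2}(\nabla u) \ge -v^{2}\,\Phi^{2}(\nabla \phi) + 2v\,\nabla_{\xi}\psi(\nabla \phi)\cdot \nabla u.
\end{equation*}
Substituting $v = u/\phi$, integrating over $\R^{N}_{+}$, and integrating by parts on the cross term via $\mathrm{div}(\nabla_{\xi}\psi(\nabla \phi)/\phi) = Q\phi/\phi - \Phi^{2}(\nabla \phi)/\phi^{2}$ (where $Q\phi := \mathrm{div}(\nabla_{\xi}\psi(\nabla \phi))$ is the anisotropic Laplacian), one arrives at the master inequality
\begin{equation*}
\int_{\R^{N}_{+}}\Phi^{2}(\nabla u)\,dz \ge -\int_{\R^{N}_{+}} \frac{Q\phi}{\phi}\,u^{2}\,dz + \int_{\pd \R^{N}_{+}} \frac{\nabla_{\xi}\psi(\nabla \phi)\cdot \nu_{\rm out}}{\phi}\,u^{2}\,dx,
\end{equation*}
with outer normal $\nu_{\rm out} = (0,-1)$.

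The problem is thus reduced to finding a positive $\phi$ satisfying $-Q\phi/\phi \equiv \tfrac{(\beta-2)^{2}}{4[\Phi^{0}(z)]^{2}}$ in $\R^{N}_{+}$ together with the Robin-type condition $-\pd_{t}\phi(x,0)/\phi(x,0) \equiv K(N,\beta)/H^{0}(x)$ on $\pd \R^{N}_{+}$. Guided by the Euclidean extremizer, I would try the anisotropic ansatz $\phi(z) = [\Phi^{0}(z)]^{-(\beta-2)/2}\,F(t/\Phi^{0}(z))$ with $\omega := t/\Phi^{0}(z) \in [0,1]$. The crucial algebraic simplification is produced by the two Finsler identities $H(\nabla H^{0}) \equiv 1$ a.e.\ on $\R^{N-1}$ and $\nabla H(\nabla H^{0}(x)) = x/H^{0}(x)$: combined with $[H^{0}(x)]^{2} = [\Phi^{0}(z)]^{2}(1-\omega^{2})$, they cause the anisotropy to drop out of the computation, giving $\nabla_{\xi}\psi(\nabla_{x}\phi)(z) = r^{-a-2}\,G(\omega)\,x$ with $r = \Phi^{0}(z)$, $a = (\beta-2)/2$, and $G(\omega) := -aF(\omega) - \omega F'(\omega)$. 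After this collapse, the calculations of $Q\phi$ and $\pd_{t}\phi$ proceed exactly as in Euclidean polar coordinates, and the two required conditions reduce to the Jacobi-type boundary value problem
\begin{equation*}
(1-\omega^{2})F''(\omega) - (N-1)\omega F'(\omega) + \frac{(\beta-2)(\beta-N)}{2}\,F(\omega) = 0, \qquad \frac{F'(0)}{F(0)} = -K(N,\beta).
\end{equation*}
This is precisely the problem treated in \cite{AFV}, solved explicitly by a hypergeometric function, from which the formula \eqref{constant} for $K(N,\beta)$ is inherited unchanged.

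The sharpness statement \eqref{optimal} follows by testing the inequality against $u = \phi$ itself, suitably truncated near the origin and at infinity; equality in the convexity step occurs precisely when $v$ is locally constant, and since the ODE and the $r$-asymptotics of $\phi$ are identical to the Euclidean case, the cutoff analysis of \cite{AFV} transfers verbatim. I expect the main technical obstacle to be the verification of the collapse described above: one must expand $\nabla \phi = r^{-a-1}[G(\omega)\nabla \Phi^{0} + F'(\omega)\,e_{N}]$ (with $e_{N}$ the unit vector in the $t$-direction), track the cancellations of the $G'\omega$ cross terms, and combine the surviving contributions so that the coefficient of $F$ in $r^{a+2}Q\phi$ collapses to $-a(N-a-2)$ and matches $-\tfrac{(\beta-2)^{2}}{4}$ up to the shift $\tfrac{(\beta-2)(\beta-N)}{2}$ that enters the Jacobi ODE. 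Once this algebraic reduction is in place, the Finsler problem becomes formally indistinguishable from the Euclidean one and the rest is essentially mechanical.
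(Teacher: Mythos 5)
Your overall framework is sound and is in substance the same as the paper's: the convexity inequality for $\Phi^2$ applied at $\nabla\phi$, an auxiliary positive solution $\phi$ of $\Delta_\Phi\phi + \frac{(\beta-2)^2}{4}[\Phi^0]^{-2}\phi = 0$ with a Robin condition on $\{t=0\}$, and the observation that the identities $H(\nabla H^0)=1$ and $\nabla H(\nabla H^0(x)) = x/H^0(x)$ make the anisotropy drop out of $\Delta_{H,x}\phi$ for functions of $\Phi^0$ and $t/\Phi^0$ --- this is exactly \eqref{Hgrad}--\eqref{Delta_H}. The paper packages your substitution $u=\phi v$ as a divergence-free field ${\bf F}$ and a flux computation over the subgraph of $u$, which also supplies the annulus truncation $r<\Phi^0<R$ and the $O(r^{N-2})$ estimate needed to justify the integration by parts across the singularity at the origin; you should not skip that step, but it is routine.

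The genuine error is the radial exponent of your ansatz. You take $\phi = [\Phi^0]^{-(\beta-2)/2}F(t/\Phi^0)$, while the correct extremal is $\phi = [\Phi^0]^{-(N-2)/2}f(\theta)$ as in \eqref{phi_form}. The exponent $-(N-2)/2$ is forced: it is the unique homogeneity for which both $\int\Phi^2(\nabla\phi)\,dz$ and the trace integral $\int \phi^2/H^0\,dx$ diverge logarithmically on annuli, which is what makes the truncated Rayleigh quotients converge to the Robin constant and hence yields sharpness. With your exponent $a=(\beta-2)/2$ the separation of variables produces an angular equation with zeroth-order coefficient $\frac{(\beta-2)(\beta-N)}{2}$, which is \emph{not} the equation of \cite{AFV}: equation \eqref{Eq:f} here has coefficient $\frac{(\beta-2)^2-(N-2)^2}{4}$, and the two agree only at $\beta=N$. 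Consequently the bounded-at-$\theta=\pi/2$ solution of \emph{your} ODE does not satisfy $F'(0)/F(0)=-K(N,\beta)$ with $K(N,\beta)$ from \eqref{constant}: in hypergeometric form your equation gives the Robin constant $2\,\Gamma(A+\tfrac12)\Gamma(B+\tfrac12)/\bigl(\Gamma(A)\Gamma(B)\bigr)$ with $A+B=\frac{N-2}{2}$ and $AB=\frac{(\beta-2)(N-\beta)}{8}$, whereas \eqref{constant} corresponds to $AB=\frac{(N+\beta-4)(N-\beta)}{16}$. The discrepancy is not cosmetic: at $\beta=2$ your quadratic has $AB=0$, the only bounded solution of your angular problem is the constant, and the method returns the constant $0$ instead of the Kato constant $K(N,2)>0$. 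Replacing the exponent by $-(N-2)/2$ repairs this, and the rest of your argument then goes through essentially as written.
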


The non-attainability of the optimal constant $K(N,\beta)$ can be seen as follows:
If the infimum \eqref{optimal} were attained by a function $u \in W^{1,2}(\R^N_{+})$, 
then it is proportional to the solution of the problem
\begin{equation}
\label{Eq:P}
	\begin{cases}
	\Delta_{\Phi} \phi(x,t) + \disp{\frac{(\beta-2)^2}{4} \frac{\phi(x,t)}{[\Phi^0(x,t)] ^2}} = 0 & \textrm{in} \>\> \re^N_{+},\\
	\phi(x,0) = [\Phi^0(x,0)] ^{-\frac{N}{2}+1} & \textrm {on} \>\> \pd \re^N_{+}.
	\end{cases}
\end{equation}
Here 
\[
	\Delta_\Phi = \Delta_{H,x} + \frac{\pd^2}{\pd t^2},
\]
and 
\[
	\Delta_{H,x} \phi 
	= \sum_{j=1}^{N-1} \frac{\pd}{\pd \xi_j} \( H(\nabla_x \phi)(H_{\xi_j}(\nabla_x \phi) \)
\]
is the so-called Finsler-Laplace operator with respect to the Finsler norm $H$ on $\R^{N-1}$ (see \S \ref{section:notations} for the definition). 
However, we see that solution $\phi$ to \eqref{Eq:P} satisfies $\phi \notin W^{1,2}(\R^N_{+})$.
Actually in \S \ref{section:extremal} we prove that the solutions to \eqref{Eq:P} are of the form
\[
	\phi(x,y) = \Phi^0(x,t)^{-\frac N 2 +1}\, w(\sin^2\theta), \quad \theta = \arctan \frac t {H^0(x)}\,,
\]
where the function $w$ is expressed in terms of the hyper-geometric series, i.e.
\begin{equation} 
\label{series}
	F(a,b,c;y)= 1 + \frac{\Gamma(c)}{\Gamma(a)\Gamma(b)}\sum_{k=1}^\infty \frac{\Gamma(a+k)\Gamma(b+k)}{\Gamma(c+k)}\frac{y^k}{k!}
\end{equation}
(see \eqref{w_form} in \S \ref{section:extremal}) and the optimal constant is given by
\begin{equation}
\label{Kw}
	K(N,\beta)= -\lim_{\theta\to 0} (\sin 2\theta) w'(\sin^2\theta).
\end{equation}

Theorem \ref{Theorem:main} is obtained by using a very classical method of Calculus of Variations introduced 
by Weierstrass and developed by Schwartz, Lichtenstein and Morrey (we refer to \cite{GH} for the general theory and references therein).   
It has been adopted in \cite{AFV} and \cite{F} to prove inequality \eqref{HardyAFV} and previously, 
in \cite{Al2} to find an improvement of the classical Sobolev inequality. 
It consists of proving that a solution of the Euler-Lagrange equation of a suitable functional is, actually, a minimum. 
Such method is the crucial tool of our approach, since we deal with functions having non zero trace on the boundary.
For more precise description of the method, we refer to \cite{AFV}.

Finally in \S \ref{section:cone} we face the case of open cones and we show that the same method can be applied to prove the Hardy-Kato inequality 
in the cone 
\begin{equation*}
	C_\alpha = \left\{ (x,t)\in \R^N_+: t> (\tan\alpha)\, H^0(x) \right\}, \quad -\frac{\pi}{2} < \alpha < \frac{\pi}{2}.
\end{equation*}
(See \cite{Na1}, \cite{Na2} and \cite{VH} for similar results). 
In the following,
\begin{equation}
\label{area}
	d\sigma_{x,t} = \sqrt{1 + (\tan^2 \alpha) |\nabla H^0(x)|^2} dx, \quad x \in \re^{N-1}
\end{equation}
denotes an $(N-1)$-dimensional surface measure on $\pd C_{\al}$.
%
%
\begin{theorem}
\label{Theorem:cone}
Assume $N \ge 3$ and $2 \le \beta < N$. 
Let $\Phi^0(x,t)$ and $\Phi(\xi,t)$ be Finsler norms on $\re^N_{+}$ defined by \eqref{Phi0} and \eqref{Phi} respectively.
Then there exists a constant $K(N, \alpha, \beta) \in \re$ such that 
\begin{align}
\label{Eq:Kato(cone)}
	K(N,\alpha, \beta) \sqrt{1 + \tan^2 \al} & \int_{\pd C_{\alpha}} \frac{u^2(x,t)}{\Phi^0(x,t)} \frac{d\sigma_{x,t}}{\sqrt{1 + (\tan^2 \al) |\nabla H^0(x)|^2}} \\
	 \le &\int_{C_{\alpha}} \Phi^2(\nabla u) \, dxdt - \frac {(\beta-2)^2}{4}\int_{C_{\alpha}} \frac{u^2(x,t)}{[\Phi^0(x,t)]^2} dxdt \notag
\end{align}
holds true for any $u \in W^{1,2}(C_{\alpha})$. 
The constant $K(N, \alpha, \beta)$ is given by
\[
	K(N,\alpha,\beta) = -(\sin 2\alpha) \frac{w^{\prime}(\sin^2 \alpha)}{A_{\al, \beta}},
\]
where $w$ is defined in \eqref{w_form} with $k = -K(N, \beta)$ for $K(N, \beta)$ in \eqref{constant},
and $A_{\al, \beta}$ is defined in \eqref{A_ab}.
\end{theorem}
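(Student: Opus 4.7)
The plan is to mimic the Weierstrass--Schwartz--Lichtenstein--Morrey strategy used in the proof of Theorem~\ref{Theorem:main}, with the cone $C_{\alpha}$ replacing the half-space $\re^N_{+}$. The central idea is to produce a positive function $\phi$ that solves the Euler--Lagrange equation associated with the functional in \eqref{Eq:Kato(cone)} inside $C_{\alpha}$ and whose anisotropic co-normal derivative on $\partial C_{\alpha}$ is proportional to $\phi/\Phi^{0}$; once such $\phi$ is in hand, a Picone-type completion of squares converts the variational inequality into a pointwise one.

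First I would introduce the Finsler polar coordinates $r=\Phi^{0}(x,t)$, $\theta=\arctan(t/H^{0}(x))$, so that $\partial C_{\alpha}$ corresponds to the level set $\{\theta=\alpha\}$. Following \S~\ref{section:extremal} I would reuse the same separated-variable ansatz as in the half-space,
\[
\phi(x,t)=[\Phi^{0}(x,t)]^{-\frac{N}{2}+1}\,w(\sin^{2}\theta),
\]
with $w$ the hypergeometric function of \eqref{w_form}, normalized here with $k=-K(N,\beta)$. The ODE derived in \S~\ref{section:extremal} automatically forces
\[
\Delta_{\Phi}\phi+\frac{(\beta-2)^{2}}{4}\,\frac{\phi}{[\Phi^{0}]^{2}}=0
\]
in all of $\re^N_{+}$, hence in particular inside $C_{\alpha}$; passing from the half-space to the cone therefore only changes the boundary contribution.

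The heart of the proof is a Finsler Picone identity. Setting $u=\phi v$ and using convexity of $\Phi^{2}$ together with its $2$-homogeneity (which gives $\Phi^{2}(v\nabla\phi)=v^{2}\Phi^{2}(\nabla\phi)$), one has
\[
\Phi^{2}(\phi\nabla v+v\nabla\phi)\;\geq\;v^{2}\Phi^{2}(\nabla\phi)+2\phi v\,A(\nabla\phi)\cdot\nabla v,\qquad A(\eta):=\tfrac{1}{2}\nabla_{\eta}\Phi^{2}(\eta),
\]
which is the Finsler analogue of $|a+b|^{2}\geq|b|^{2}+2\,a\cdot b$. I would integrate the cross term by parts over $C_{\alpha}$, use the Euler relation $A(\nabla\phi)\cdot\nabla\phi=\Phi^{2}(\nabla\phi)$ together with the PDE satisfied by $\phi$ to cancel the bulk Hardy term, and be left with
\[
\int_{C_{\alpha}}\Phi^{2}(\nabla u)\,dz-\frac{(\beta-2)^{2}}{4}\int_{C_{\alpha}}\frac{u^{2}}{[\Phi^{0}]^{2}}\,dz\;\geq\;\int_{\partial C_{\alpha}}\frac{u^{2}}{\phi}\,A(\nabla\phi)\cdot\nu_{E}\,dS,
\]
where $\nu_{E}$ is the Euclidean outer unit normal and $dS$ is the Euclidean surface measure. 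A standard density argument, applied to $|u|$, removes the requirement $v>0$ in the Picone step.

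The main obstacle will be the boundary computation. Unlike the half-space case, $\partial C_{\alpha}$ is not a coordinate surface of the Finsler polar system, so $A(\nabla\phi)\cdot\nu_{E}$ mixes the radial derivative $\partial_{r}\phi$ with the angular derivative $\partial_{\theta}\phi$ evaluated at $\theta=\alpha$; the latter is precisely what produces the factor $(\sin 2\alpha)\,w'(\sin^{2}\alpha)$ appearing in the statement. One must then reconcile the Euclidean element $dS$, the graph weight $\sqrt{1+\tan^{2}\alpha\,|\nabla H^{0}(x)|^{2}}$ defining $d\sigma_{x,t}$ in \eqref{area}, and the isotropic factor $\sqrt{1+\tan^{2}\alpha}$ on the left of \eqref{Eq:Kato(cone)}, and to identify their combined contribution with $1/A_{\alpha,\beta}$; this is where the normalization $A_{\alpha,\beta}$ in \eqref{A_ab} enters. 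Once that identification is accomplished, nonnegativity of the residual term from the Picone inequality yields \eqref{Eq:Kato(cone)} with $K(N,\alpha,\beta)=-(\sin 2\alpha)\,w'(\sin^{2}\alpha)/A_{\alpha,\beta}$.
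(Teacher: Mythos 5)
Your proposal is correct and follows essentially the same route as the paper: the same separated--variables extremal $\phi=[\Phi^0]^{-\frac{N}{2}+1}w(\sin^2\theta)$ (the paper's $\phi_{\alpha,\beta}=\phi/A_{\alpha,\beta}$ of \eqref{phi_ab} differs from yours only by the constant factor $A_{\alpha,\beta}=w(\sin^2\alpha)$, which cancels in every ratio that matters), the same convexity step, and the same boundary flux computation on $\{\theta=\alpha\}$, in which the radial terms cancel and only $(\sin 2\alpha)\,w'(\sin^2\alpha)/w(\sin^2\alpha)$ survives --- this is precisely how $A_{\alpha,\beta}$ enters, as the boundary value of $w$ in the denominator. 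The only presentational difference is that the paper packages the Picone/completion-of-squares argument as the divergence theorem for the vector field $\F(z,h)$ of \eqref{F} over the subgraph of $u$ in $\re^{N+1}$, which is equivalent to your substitution $u=\phi v$ followed by integration by parts in $C_\alpha$. One step you should make explicit: $\phi$ and $\nabla\phi$ are singular at the origin, so the integration by parts must be carried out on $C_\alpha\cap\{r<\Phi^0<R\}$ and the flux through $\{\Phi^0=r\}$ shown to be $O(r^{N-2})$, exactly as in \eqref{div_s3_estimate}; the paper imports this estimate verbatim from the half-space proof.
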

A proof of Theorem \ref{Theorem:cone} is given in \S \ref{section:cone}.
Note that the left-hand side of \eqref{Eq:Kato(cone)} is written as
\begin{align*}
	 K(N,\alpha, \beta) \int_{\re^{N-1}} \frac{u^2(x,(\tan \al) H^0(x))}{H^0(x)} dx.
\end{align*}
Note also that by \eqref{Kw} and the fact $A_{\al, \beta} \Big |_{\al = 0} = 1$, we clearly observe that
$$
	\lim_{\alpha\to 0} K(N,\alpha,\beta) = K(N,\beta).
$$

%
%
\section{Notations and preliminary results}
\label{section:notations}

In this section, we introduce some notations. 
Let $n \in \N$ be an integer and
let $H: \R^n \to [0,+\infty)$ be a continuous function satisfying the following properties
\begin{align}
\label{homogeneous}
	&H(\lambda \xi) = |\lambda| H(\xi), \quad \forall \xi \in \R^n, \forall \lambda \in \R, \\
\label{bound}
	&\gamma_1 |\xi| \le H(\xi) \le \gamma_2 |\xi|, \quad \forall \xi \in \R^{n}
\end{align}
for two positive constants $0<\gamma_1 \le \gamma_2 < +\infty$.
We denote the unit $H$-ball as
$$
	B_H =\{ \xi \in \R^n \,: \, H(\xi) < 1 \}\,.
$$
The {\it dual function}, or {\it polar function}, $H^0: \R^n \to [0,+\infty)$ of $H$ is defined by the formula
$$
	H^0(x) = \sup_{\xi \in \R^n \setminus \{ 0 \}} \frac{\langle \xi, x \rangle_{n}}{H(\xi)} 
	= \sup_{\xi \in B_H} \,\langle \xi, x \rangle_{n} \, ,\quad x \in \R^{n},
$$
here and in the following,
$\langle \xi ,x \rangle_{n} = \sum_{j=1}^n \xi_j x_j$ denotes the Euclidean inner product of $\R^n$.  
Note that by definition, it holds that
\begin{equation}
\label{Schwarz}
	|\langle \xi, x \rangle_n| \le H(\xi) H^0(x), \quad \xi, x \in \R^n.
\end{equation}
It is known that $H^0$ is a convex, continuous function on $\R^n$,
which satisfies the following properties
\[
	H^0(\lambda x) = |\lambda| H^0(x), \quad \forall x \in \R^n, \forall \lambda \in \R.
\]
\begin{equation}
\label{bound0}
	\frac{1}{\gamma_2} |x| \le H^0(x) \le \frac{1}{\gamma_1} |x|, \quad \forall x \in \R^n.
\end{equation}
A function  $H \in C^2 \( \re^n \setminus \{0\} \)$ is a {\it Finsler  norm},
if it satisfies properties \eqref{homogeneous}, \eqref{bound}, and it is strictly convex. 
For references about Finsler norms (or, more generally, for Finsler metrics) see \cite{BCS}, \cite{BP}.

Here we just recall the following properties:
if $H$ is a Finsler norm, then $H$ is the polar function of $H^0$, that is the following equality holds true
$$
	H(\xi) = (H^0)^0(\xi) = \sup_{x \in \re^n \setminus \{ 0 \}} \frac{\langle \xi ,x \rangle_{n}}{H^0(x)},
$$
and $H^0$ is the {\it gauge function} of the closed convex set $\ol{B_H}$.
Moreover we have the following basic identities whose proof can be found, for example, in \cite{BP} Lemma 2.1, 2.2, or \cite{VS} Proposition 6.2.
\begin{align}
\label{(1)}
	\nabla H (\la \xi) = \frac{\la}{|\la|} \nabla H(\xi), &\quad \forall  \xi \in \R^{n} \setminus \{ 0\}, \forall \la \in \R \setminus \{ 0\}, \\
\label{(2)}
	\langle \nabla  H(\xi), \xi \rangle_{n}= H(\xi), &\quad \forall \xi \in \R^{n} \setminus \{ 0\}, \\
\label{(3)}
	H \( \nabla H^0(x) \) = 1, &\quad \forall x \in \R^{n} \setminus \{ 0\}, \\
\label{(4)}
	\nabla H \(\nabla H^0(x) \) =\frac{x}{H^0(x)}, &\quad \forall x \in \R^{n} \setminus \{ 0\}.
\end{align}
Analogous properties hold true for $H^0$ by taking into account that $H(\xi) = (H^0)^0(\xi)$.

Finally we recall that if $H: \R^n \to [0,+\infty)$ is a Finsler norm, 
the {\it Finsler-Laplace operator} $\Delta_H$ is defined as
\begin{align*}
	\Delta_H u (x) &= {\rm div} \( H(\nabla u) \nabla_{\xi} H(\nabla u) \)(x) \\
	&= \sum_{j=1}^{n} \frac{\pd}{\pd \xi_j} \( H(\xi) H_{\xi_j}(\xi) \) \Big |_{\xi = \nabla u(x)} 
\end{align*}
for any  function $u\in C^2(\R^{n})$.

%
%
\section{Construction of exstremals}
\label{section:extremal}

This section is devoted to the construction of a smooth solution to \eqref{Eq:P}.
Let $N \ge 3$.
We denote $\re_+ = [0,+\infty)$, $\re^N_{+} = \re^{N-1} \times \re_+$ and $z = (x,t) \in \re^N_{+}$. 
For a function $u=u(x,t)$ in the Sobolev space $W^{1,2}(\R^N_+)$, 
$\nabla u = (\nabla_x u,\frac{\pd u}{\pd t})$ denotes its full gradient where
$\nabla_x u = \(\frac{\partial u}{\partial x_1}, \cdots, \frac{\partial u}{\partial x_{N-1}}\)$. 

\begin{proposition}
\label{prop:extremal}
Let $2\le \beta < N$ and let $K(N,\beta)$ be the constant defined in \eqref{constant}. 
Then the functions
\begin{multline}
\label{extremal}
	\phi(x,t) 
= \frac{1}{\left[ \Phi^0(x,t) \right]^{\frac{N-2}{2}}} F\( \frac{N+\beta}{4}-1,\frac{N-\beta}{4},\frac{1}{2};\frac{t^2}{\left[ \Phi^0(x,t) \right]^2} \) \\
	 - t\frac{K(N,\beta)}{\left[ \Phi^0(x,t) \right]^{\frac{N}{2}}} 
	F\(\frac{N+\beta}{4}-\frac{1}{2},\frac{N-\beta}{4} + \frac{1}{2},\frac{3}{2};\frac{t^2}{\left[ \Phi^0(x,t) \right]^2}\)
\end{multline}
are regular solutions to the problem \eqref{Eq:P}.
Moreover, $\phi$ in \eqref{extremal} satisfies
\begin{equation}
\label{normal_derivative}
	\frac{\pd \phi}{\pd t}(x, 0) = - \frac{K(N, \beta)}{[\Phi^0(x,0)]^{\frac{N}{2}}}. 
\end{equation}
\end{proposition}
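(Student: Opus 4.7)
The plan is to verify \eqref{extremal} by separation of variables in Finsler polar coordinates. Introduce
\[
	r = \Phi^0(x,t) = \sqrt{[H^0(x)]^2 + t^2}, \qquad \theta = \arctan\frac{t}{H^0(x)},
\]
so that $H^0(x) = r\cos\theta$ and $t = r\sin\theta$, and set $y = \sin^2\theta = t^2/r^2$. I will seek $\phi$ in the form $\phi(x,t) = r^{-(N-2)/2}\, w(y)$, so that the prescribed boundary datum $[\Phi^0(x,0)]^{-(N-2)/2}$ becomes simply the normalization $w(0) = 1$.

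The first step is to reduce the PDE in \eqref{Eq:P} to an ODE for $w$. Using the Finsler identities \eqref{(1)}--\eqref{(4)} together with Euler's identity $\langle \nabla H^0(x), x\rangle_{N-1} = H^0(x)$, a direct computation gives
\[
	\Delta_{H,x}\bigl( G(H^0(x), t) \bigr) = G_{ss}(H^0(x), t) + \frac{N-2}{H^0(x)}\, G_s(H^0(x), t)
\]
for any smooth $G = G(s,t)$. Hence $\Delta_\Phi G(H^0(x),t) = G_{ss} + \tfrac{N-2}{s} G_s + G_{tt}$ with $s = H^0(x)$, which is exactly the standard $\R^N$-Laplacian on functions symmetric under rotations about the $t$-axis, written in the coordinates $(s,t)$. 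Switching to $(r,\theta)$ and substituting the ansatz, $\Delta_\Phi \phi + \tfrac{(\beta-2)^2}{4r^2}\phi = 0$ reduces, after factoring out $\tfrac{1}{4}r^{-(N+2)/2}$, to the Gauss hypergeometric equation
\[
	y(1-y)\,w''(y) + \(\tfrac{1}{2} - \tfrac{N}{2}y\)w'(y) + \tfrac{(\beta-2)^2 - (N-2)^2}{16}\,w(y) = 0,
\]
whose parameters are $c = \tfrac{1}{2}$, $a+b+1 = \tfrac{N}{2}$, and $-ab = \tfrac{(\beta-2)^2 - (N-2)^2}{16}$. Solving the resulting quadratic yields $\{a,b\} = \{\tfrac{N+\beta}{4}-1,\, \tfrac{N-\beta}{4}\}$.

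Since $c = 1/2 \notin \Z$, the two Frobenius solutions near $y = 0$ are $F(a,b;\tfrac{1}{2};y)$ and $\sqrt{y}\,F(a+\tfrac{1}{2},b+\tfrac{1}{2};\tfrac{3}{2};y)$. Writing $w(y) = A\,F(a,b;\tfrac{1}{2};y) + B\sqrt{y}\,F(a+\tfrac{1}{2},b+\tfrac{1}{2};\tfrac{3}{2};y)$ and converting back via $\sqrt{y} = t/r$, the profile $\phi = r^{-(N-2)/2}w$ matches exactly \eqref{extremal} provided $A = 1$ and $B = -K(N,\beta)$. The boundary condition is checked at $y = 0$: since $F(\cdot,\cdot;\cdot;0) = 1$ and the second term vanishes, one has $\phi(x,0) = A\,[H^0(x)]^{-(N-2)/2}$, so $A = 1$. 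The identity \eqref{normal_derivative} then follows by differentiating in $t$ at $t = 0$: since $\pd_t y$ and $\pd_t r$ both vanish there, only the explicit $t$-prefactor in the second summand contributes, giving $\pd_t \phi(x, 0) = B\,[H^0(x)]^{-N/2} = -K(N,\beta)\,[\Phi^0(x,0)]^{-N/2}$.

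The main technical obstacle is the reduction of the PDE to the hypergeometric ODE: one must carefully track the ansatz $r^{-(N-2)/2}w(y)$ through both the radial and angular derivatives of $\Delta_\Phi$ and verify that the radial part contributes precisely $-(N-2)^2/4$, which combines with the Hardy coefficient $(\beta-2)^2/4$ to produce the term $[(\beta-2)^2 - (N-2)^2]/16$ in the reduced ODE. Once this calculation is in hand, identifying the two Frobenius solutions and reading off the boundary trace and the normal derivative at $y = 0$ is essentially algebraic.
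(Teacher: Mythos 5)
Your reduction is correct and follows essentially the same route as the paper: the identity $\Delta_{H,x}\bigl(G(H^0(x),t)\bigr)=G_{ss}+\frac{N-2}{H^0(x)}G_s$ (a clean packaging of the paper's computations \eqref{Hgrad}--\eqref{Delta_H}), the passage to $(\rho,\theta)$ giving \eqref{Eq:polar}, the ansatz $\rho^{-(N-2)/2}w(\sin^2\theta)$ producing the hypergeometric equation with $c=\tfrac12$, $a+b=\tfrac{N-2}{2}$, $ab=\tfrac{(N-2)^2-(\beta-2)^2}{16}$, and the reading off of the trace and of \eqref{normal_derivative} at $y=0$ are all accurate.

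However, there is a genuine gap: you never justify the choice $B=-K(N,\beta)$, and consequently you never establish that $\phi$ is a \emph{regular} solution. Matching coefficients against \eqref{extremal} only shows that, for \emph{every} value of $B$, the function $r^{-(N-2)/2}\bigl[F(a,b;\tfrac12;y)+B\sqrt{y}\,F(a+\tfrac12,b+\tfrac12;\tfrac32;y)\bigr]$ solves the PDE away from the set $\{y=1\}$ and satisfies the Dirichlet datum and $\pd_t\phi(x,0)=B[H^0(x)]^{-N/2}$. The set $\{y=1\}$ is the interior half-line $\{x=0,\ t>0\}$, and since $c-a-b=\tfrac{3-N}{2}<0$ for $N>3$, each hypergeometric factor blows up like $(1-y)^{(3-N)/2}$ there, so a generic combination gives $\phi\sim (H^0(x))^{3-N}$ near the axis and is \emph{not} regular. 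The entire content of the constant \eqref{constant} is that $B=-K(N,\beta)$ is the unique value for which the two singularities cancel; verifying this requires the asymptotics \eqref{formula2}--\eqref{formula3} of $F(a,b,c;y)$ as $y\to 1$ (together with \eqref{formula_AS} and $\Gamma(\tfrac12)=2\Gamma(\tfrac32)$, leading to the linear relation \eqref{help}), plus the separate logarithmic case $N=3$. Without this step the proposition's claim of regularity on all of $\re^N_+$ is unproven and the specific constant $K(N,\beta)$ is unmotivated; you should add the analysis at $y=1$ to close the argument.
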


\begin{proof} [Proof of Proposition \ref{prop:extremal}]
Define new variables
\begin{equation}
\label{rho_theta}
\begin{cases}
	&\rho = \Phi^0(x,t)=\sqrt {[H^0(x)]^2+t^2},\\
	&\theta = {\rm arctan}\disp\frac{t}{\Phi^0(x,0)} = {\rm arctan}\disp\frac{t}{H^0(x)}, \quad 0<\theta<\frac \pi 2.
\end{cases}
\end{equation}
Then we have
\begin{align*}
	\frac{\pd \rho}{\pd t} = \frac{t}{\rho}, &\quad \frac{\pd \theta}{\pd t} = \frac{H^0(x)}{\rho^2}, \\
	\nabla_x \rho = \frac{H^0(x)}{\rho}\nabla H^0(x), &\quad \nabla_x \theta = -\frac{t}{\rho^2} \nabla_x H^0(x).
\end{align*}
Thus we see
\begin{align}
\label{phi_t}
 	&\phi_t = \frac{\phi_\rho}{ \rho}t+\frac {\phi_\theta}{ \rho^2}H^0(x), \\
\label{phi_tt}
 	&\phi_{tt} = \phi_{\rho\rho} \frac{t^2}{\rho^2}  + 2\phi_{\rho\theta} \frac{t H^0(x)}{\rho^3}
 	+ \phi_{\theta\theta} \frac{(H^0(x))^2}{\rho^4} \\
	&\hspace{4em} + \phi_{\rho} \( \frac{1}{\rho} - \frac{t^2}{\rho^3} \) - 2 \phi_{\theta} \frac{t H^0(x)}{\rho^4}, \notag \\
\label{gradx}
	&\nabla_x\phi(x,t) = \( \frac {\phi_\rho}{ \rho}-\tan \theta \frac {\phi_\theta}{ \rho^2} \) H^0(x) \nabla H^0(x).
\end{align} 
Moreover by \eqref{homogeneous}, \eqref{(1)}, \eqref{(3)}, \eqref{(4)} and \eqref{gradx}, we have
\begin{equation}
\label{Hgrad}
	H(\nabla_x\phi(x,t))\nabla H(\nabla_x\phi(x,t)) = \(\frac{\phi_\rho}{\rho}-\tan \theta \frac{\phi_\theta}{\rho^2}\) x.
\end{equation}
Thus by \eqref{(2)}, we have
\begin{align}
\label{Delta_H}
	\Delta_{H, x} \phi &= {\rm div}_x \( H(\nabla_x\phi(x,t))\nabla H(\nabla_x\phi(x,t)) \) \\
 	&= \phi_{\rho\rho} \frac{(H^0(x))^2}{\rho^2}  - 2\phi_{\rho\theta} \frac{t H^0(x)}{\rho^3}
 	+ \phi_{\theta\theta} \frac{t^2}{\rho^4}  \notag \\
	&+ \phi_{\rho} \( \frac{N-1}{\rho} - \frac{(H^0(x))^2}{\rho^3} \) + \phi_{\theta} \( \frac{2 t H^0(x)}{\rho^4} - \frac{N-2}{\rho^2} \tan \theta \). \notag
\end{align} 
Therefore by \eqref{phi_tt}, \eqref{Delta_H}, and the fact $\Delta_{\Phi} = \Delta_{H,x} + \frac{\pd^2}{\pd t^2}$,
the equation \eqref{Eq:P} in the new variables \eqref{rho_theta} can be written as 
\begin{equation}
\label{Eq:polar}
	\phi_{\rho\rho} + (N-1)\frac{\phi_\rho}{\rho} - (N-2)\frac{\phi_\theta}{\rho^2} \tan \theta + \frac{\phi_{\theta\theta}}{\rho^2}
	= -\( \frac{\beta-2}{2} \)^2 \frac{\phi}{\rho^2}.
\end{equation}
Searching for solutions to \eqref{Eq:polar} of the form
\begin{equation}
\label{phi_form}
	\phi(x,t)=\rho^{-\frac {N}{2}+1}f(\theta)\,, 
\end{equation}
we see that the problem  \eqref{Eq:P} is equivalent to the following limit problem:
\begin{equation}
\label{Eq:f}
\begin{cases}
	f''(\theta)-(N-2) (\tan \theta) f'(\theta)-\(\frac {(N-2)^2}{4} - \frac {(\beta-2)^2}{4}\)f(\theta) = 0 & \theta \in (0,\frac{\pi}{2}), \\
	f(0) = 1, \quad \disp{\lim_{\theta \to \frac{\pi}{2}} f(\theta) \in \R}.
\end{cases}
\end{equation}
Problem \eqref{Eq:f} is explicitly solved in \cite{PZ} (pp. 271, eq.131) (see also \cite{AFV}). 
Indeed, $f(\theta) = w(\sin^2\theta)$, and $w$ is given by
\begin{equation}
\label{w_form}
	w(y) = F \(\frac{N+\beta}{4}-1,\frac{N-\beta}{4},\frac{1}{2};y\) 
	+ k \sqrt y F \(\frac{N+\beta}{4}-\frac{1}{2},\frac{N-\beta}{4}+\frac{1}{2},\frac{3}{2}; y \) 
\end{equation}
for a suitable constant $k$.
Here $F(a,b,c;y)$ is the hypergeometric series given in \eqref{series} which is convergent for $0 \le y < 1$.
Moreover in \cite{AFV}, it is proved that $f$ is a bounded solution to $(\ref{Eq:f})$, i.e.
$\lim_{y \to 1} w(y) \in \re$ holds true, if and only if $k = -K(N,\beta)$.  
Here we repeat those arguments for the sake of completeness, analyzing the behavior of a hypergeometric function near the point $y=1$. 
For this purpose,  we recall that (see \cite {AS} pp. 559) 
\begin{equation}
\label{formula2}
	\lim_{y \to 1} \frac {F(a,b,c;y)}{\ln(1-y)}=-\frac {\Gamma(a+b)}{\Gamma (a)\Gamma(b)}, \quad if \quad  c-a-b=0,
\end{equation}
\begin{equation}
\label{formula3}
	\lim_{y \to 1} \frac {F(a,b,c;y)}{(1-y)^{c-a-b}}=\frac {\Gamma (c)\Gamma(a+b-c)}{\Gamma (a)\Gamma(b)}, \quad if \quad  c-a-b<0.
\end{equation}
An easy calculation shows that for both hypergeometric functions appearing in (\ref{w_form}), $c-a-b = \frac{3-N}{2} \le 0$ when $N \ge 3$. 
Let us first examine the case $N > 3$.
We write
\begin{align*}
	&\lim_{y \to 1} w(y) \\
	&=\lim_{y \to 1} (1-y)^{\frac {3-N}{2}}
	\left [ \frac{F\(\frac{N+\beta}{4}-1,\frac{N-\beta}{4},\frac{1}{2};y\)}{(1-y)^{\frac {3-N}{2}}}
	+ k \sqrt y \frac{F\(\frac{N+\beta}{4}-\frac {1}{2},\frac{N-\beta}{4}+\frac {1}{2},\frac{3}{2};y\)}{(1-y)^{\frac {3-N}{2}}}\right ].
\end{align*}
By \eqref{formula3}, the formula (\cite{AS}, pp. 557)
\begin{equation}
\label{formula_AS}
	\frac{d}{dy}F(a,b,c;y) = \,\frac{ab}{c} \,F(a+1,b+1,c+1;y),
\end{equation}
and de l'Hopital Theorem, we have that the limit is finite if and only if
\begin{equation}
\label{help}
	\frac{\Gamma \(\frac{1}{2} \) \Gamma \(\frac {N-3}{2}\)}{\Gamma \(\frac{N-\beta}{4}-1\) \Gamma \(\frac{N+\beta}{4}\)}
	+ k \frac{\Gamma \(\frac{3}{2}\) \Gamma \(\frac {N-3}{2}\)}{\Gamma \(\frac{N+\beta}{4} -\frac{1}{2}\) \Gamma \(\frac{N-\beta}{4} + \frac{1}{2} \)} = 0.
\end{equation}
Since $\Gamma(\frac{1}{2})= 2 \Gamma(\frac{3}{2})$, therefore $k = -K(N,\beta)$ for $K(N,\beta)$ in \eqref{constant}.
The case $N=3$ follows in a similar way, by using \eqref{formula2} instead of \eqref{formula3}.
Putting $k = -K(N,\beta)$ in \eqref{w_form} and taking \eqref{Eq:polar}, \eqref{phi_form} into account, 
we deduce that
\[
	\phi (x,t) = \( \Phi^0(x,t) \)^{-\frac N 2 +1} w\(\frac {t^2}{[{\Phi^0(x,t)}]^2} \)
\]
is a bounded solution to the equation in \eqref{Eq:P}.
Since $F(a,b,c;0)=1$, we have $w(0) = 1$ and that $\phi(x,0) = [\Phi^0(x,0)]^{-\frac{N}{2} +1}$.
Also by \eqref{phi_t} and \eqref{phi_form}, we see $\phi_t(x,0) = f'(0)[\Phi^0(x,0)]^{-\frac{N}{2}}$.  
Let us evaluate  $f'(0)$.
To do this recall that $f(\theta)=w(\sin^2\theta)$ and $w$ is given by \eqref{w_form} with $k=-K(N,\beta)$. 
Taking the formula \eqref{formula_AS} into account and $F(a,b,c;0)=1$, it is easy to check that $f'(0)=-K(N,\beta)$. 
Thus we have \eqref{Kw} and $\frac{\pd \phi}{\pd t}(x,0) = -K(N, \beta) [\Phi^0(x,0)]^{-\frac{N}{2}}$. 
This completes the proof of Proposition \ref{prop:extremal}.
\end{proof}

%
%
\section{Proof of Theorem \ref{Theorem:main}}
\label{section:proof}

In this section, we prove Theorem \ref{Theorem:main}.
Let $\Phi$ be the Finsler norm in $\R^N_+$ defined in \eqref{Phi} and let $\phi$ be the solution to the problem \eqref{Eq:P} defined in (\ref{extremal}). 
As stated in  \S \ref{section:Introduction}, we follow the arguments in \cite{AFV}, \cite{F}, 
while some modification is needed to apply them in the general Finsler context. 

Define a vector field ${\bf F}:\re^N_{+}\times \R \ni (z, h) \mapsto {\bf F}(z,h) \in \R^{N+1}$ where $z = (x,t) \in \re^N_{+}$ as
\begin{multline}
\label{F}
	{\bf F}(z,h) 
	\equiv \Bigg(\frac{2h}{\phi(z)}\Phi(\nabla\phi)\nabla\Phi(\nabla \phi)\,,\frac{h^2}{\phi^2(z)}\Phi^2(\nabla\phi)+\frac{(\beta-2)^2}4 \frac {h^2}{[\Phi^0(z)]^2}\Bigg )\,\\
	= \Bigg(\frac{2h}{\phi(x,t)}H(\nabla_x\phi)\nabla H(\nabla_x\phi)\,,\frac{2h}{\phi(x,t)} \phi_t\,,
	\frac{h^2}{\phi^2(x,t)}[H^2(\nabla_x\phi)+(\phi_t)^2]+\frac{(\beta-2)^2}4 \frac {h^2}{[H^0(x)]^2+t^2}\Bigg ).
\end{multline} 
Direct calculation shows that ${\bf F}$ is divergence free. 
Indeed, 
\[
	\langle \nabla\Phi(\nabla\phi),\nabla \phi \rangle_N = \Phi(\nabla \phi)
\]
by \eqref{(2)} 
and recalling that $\phi$ satisfies
\[
	\Delta_{\Phi} \phi + \frac{(\beta-2)^2 }4 \frac {\phi}{[\Phi^0]^2} = 0, \quad z = (x, t) \in \R^N_{+},
\]
we have
\begin{multline*}
	{\rm div}_{z,h}{\bf F}
	= {\frac{2h}{\phi}}{\rm div}_z\(\Phi(\nabla\phi)\nabla\Phi(\nabla\phi)\)
	-{\frac{2h}{\phi^2}}\Phi(\nabla\phi)\langle\nabla\Phi(\nabla\phi),\nabla \phi\rangle_N\\
	+{\frac{2h}{\phi^2}}\Phi^2(\nabla\phi)
	+{\frac{2h}{\phi}} {\frac {(\beta-2)^2}{4}} {\frac {\phi}{[\Phi^0(z)]^2}}
	={\frac{2h}{\phi}} \left[\Delta_{\Phi} \phi + \frac{(\beta-2)^2 }4 \frac {\phi}{[\Phi^0(z)]^2}\right] = 0.
\end{multline*}

For every $r>0$, denote
$$
	B_{\Phi^0}(r)=\{z \in \re^N_{+}:\, \Phi^0(z) < r \}.
$$ 
Let $R>0$ and let $u\in C_0^\infty (\re^N)$ be a nonnegative function compactly supported on $B_{\Phi^0}(R)$. 
Denote by $\Omega$ the set of $\re^N_{+}\times \R_+$ given by the subgraph of $u$ which is projected into $B_{\Phi^0}(R)\setminus B_{\Phi^0}(r)$, for some $0<r<R$. 
We get that the flow of ${\bf F}$ across $\partial\Omega$ is zero, since ${\bf F}$ is divergence free. 
This means that, if $\nu$ is the unit outer normal to $\partial \Omega$, we have
\begin{equation}
\label{div0}
	\int_{\partial \Omega}\langle {\bf F}(z,h)\,,\nu\rangle_{N+1}d\mathcal H^N=0\,.
\end{equation}
Let us write explicitly the left hand side of \eqref{div0}.
Note that $\partial\Omega$ consists of the union of the following $N$-dimensional surfaces
\begin{align}
\label{s1}
	&\Sigma_1=\left\{(z,0)\in \re^N_{+} \times \R : z \in B_{\Phi^0(R)} \setminus B_{\Phi^0(r)} \right\}, \\
\label{s2}
	&\Sigma_2=\left\{(z,h) \in \partial\re^N_{+} \times \R : r < \Phi^0(z)< R, \, 0\le  h\le u(z) \right\}, \\
\label{s3}
	&\Sigma_3=\left\{(z,h) \in \re^N_{+} \times \R : \Phi^0(z)= r, \, 0 \le  h \le u(z) \right\}, \\
\label{s4}
	&\Sigma_4=\left\{(z,h) \in \re^N_{+} \times \R : z \in B_{\Phi^0(R)}\setminus B_{\Phi^0(r)}, \, h= u(z) \right\}.
\end{align}
If $\nu_i$ denotes the outer unit normal of $\Sigma_i$ $(i=1,2,3,4)$ with respect to the Euclidean norm, 
from \eqref{div0} we get
\begin{equation}
\label{div_sum}
	\sum_{i=1}^4 \int_{\Sigma_i}\langle {\bf F}\,,\nu_i\rangle_{N+1} d\mathcal H^N = 0.
\end{equation}
%
%
Since ${\bf F}(z,0)\equiv 0$, we have
\begin{equation}
\label{div_s1}
	\int_{\Sigma_1}\langle {\bf F}\,,\nu_1\rangle_{N+1} d\mathcal H^N = 0.
\end{equation}
%
%
As regards the flow across $\Sigma_2$, observe that $\nu_2\equiv -{\bf e}_N$, 
where ${\bf e}_N = (0, \cdots, 0, 1, 0)$ is the unit vector of the standard Euclidean basis of $\R^{N+1}$. 
Note that $d{\mathcal H}^N = dx dh$ on $\Sigma_2$.
Since $z = (x, 0) \in \partial\re^N_{+}$ and by definition \eqref{F} of $\bf F$, we get
\begin{align*}
	\int_{\Sigma_2}\langle \F, \nu_2 \rangle_{N+1} d{\mathcal H}^N &= -2\int_{r<\Phi^0(x,0)<R} dx \int_0^{u(x,0)} \frac{h}{\phi(x,0)} \phi_t(x,0) dh\notag\\
	&=-\int_{r<\Phi^0(x,0)<R} \frac{u^2(x,0) }{\phi(x,0)}{\phi_t(x,0)} dx.
\end{align*}
Since $\phi$ satisfies \eqref{Eq:P} and \eqref{normal_derivative}, we then obtain
\begin{equation}
\label{div_s2}
	\int_{\Sigma_2}\langle \F, \nu_2 \rangle_{N+1} d{\mathcal H}^N = K(N,\beta) \int_{r<\Phi^0(x,0)<R} \frac{u^2(x,0)}{\Phi^0(x,0)} dx.
\end{equation}

%
%
Let us now evaluate the flow across $\Sigma_3$. 
The unit normal to $\Sigma_3$ is given by $\nu_3 = \(-\frac{\nabla \Phi^0}{|\nabla \Phi^0|},0 \)$, 
so that by \eqref{F} we deduce
\begin{equation}
\label{div_s3}
	\int_{\Sigma_3}\langle \F, \nu_3 \rangle_{N+1} d{\mathcal H}^N = -\frac{1}{|\nabla\Phi^0|} \int_{\Sigma_3} \frac{2h}{\phi}
	\left\langle \Phi \( \nabla\phi \) \nabla\Phi \(\nabla \phi \)\,, \nabla \Phi^0 \right\rangle_N d{\mathcal H}^N.
\end{equation}
By \eqref{Phi}, \eqref{phi_t}, \eqref{Hgrad}, and \eqref{phi_form}, it follows
\begin{multline}
\label{Eq:Phi}
	 \left\langle \Phi \(\nabla\phi\) \nabla \Phi \(\nabla \phi \), {\nabla\Phi^0} \right\rangle_N 
	= \left\langle H(\nabla_x\phi)\,\nabla H(\nabla_x\phi)\,, \frac{H^0(x)\nabla H^0(x)}{\Phi^0(x,t)} \right \rangle_{N-1} 
	+ \frac{t \phi_t}{\Phi^0(x,t)} \\
	 = \rho^{-\frac{N}{2}-2} \Bigg [ \(\( -\frac{N}{2} +1 \)f(\theta) - \tan \theta f'(\theta )\) H^0(x)\, \langle x\,, \nabla H^0(x) \rangle_{N-1} \\
	 +t\(\(-\frac{N}{2} +1 \)f(\theta)t+f'(\theta )H^0(x)\) \Bigg ]
	 =\rho^{-\frac{N}{2}} \left[\( -\frac{N}{2} +1\) f(\theta) \right] = -\frac {N-2}{2\rho} \phi\,.
\end{multline}
Note that by \eqref{bound} and \eqref{(3)} we have
\[
	|\nabla\Phi^0| \ge \frac{1}{\gamma_2} \Phi \(\nabla\Phi^0 \) = \frac{1}{\gamma_2}\,.
\]
Thus collecting \eqref{div_s3} and \eqref{Eq:Phi}, we deduce
\begin{multline}
\label{div_s3_estimate}
	\left | \int_{\Sigma_3}\langle \F, \nu_3 \rangle_{N+1}\,d{\mathcal H}^N\right |
	\le \gamma_2\frac{(N-2)}{r}\int_{\Sigma_3}h\,d{\mathcal H}^N=\gamma_2\frac {N-2}r \int_{\Phi^0(z)=r}d{\mathcal H}^{N-1}\int_0^{u(z)}hdh \\
	\le \gamma_2\frac{(N-2)}{r}\,{\mathcal H}^{N-1}\left (\{z\in \R^{N-1}\times\R_+: \Phi^0(z)=r\}\right )\sup_{\Phi^0(z)=r}\frac {u^2(z)}2 \\
	\le \gamma_2\frac{N-2}{r} \, \frac {N\,k_N \,r^{N-1}}2\sup_{\Phi^0(z)=r}\frac {u^2(z)}2=O(r^{N-2})\,,
\end{multline}
where $k_N$ is the measure of $B_{\Phi^0}$.

%
%
It remains to estimates the flow of ${\bf F}$ across $\Sigma_4$. 
In such a case the normal $\nu_4$ is given by
$$
	\nu_4 = \frac{1}{\sqrt{|\nabla u|^2+1}} \(-\nabla u, 1 \) \in \re^{N+1},
$$
and then by \eqref{F}, it follows
\begin{multline}
\label{div_s4}
	\int_{\Sigma_4}\langle {\bf F}\,, \nu_4 \rangle_{N+1}d{\mathcal H}^N= \int_{r<\Phi^0(z)<R} \langle {\bf F}(z,u(z)), (-{\bf\nabla} u(z), 1)\rangle_{N+1}dz \\
	=\int_{r<\Phi^0(z)<R} \Bigg (-\frac{2u}{\phi} \Phi \(\nabla \phi \)\langle \nabla \Phi(\nabla \phi)\,, \nabla u \rangle_N
	+ \frac {u^2}{\phi^2}\Phi(\nabla \phi)^2+\frac{(\beta-2)^2}{4} \frac{u^2(z)}{[\Phi^0(z)]^2}\Bigg )dz \,.
\end{multline}
Here note that $d{\mathcal H}^N = \sqrt{1 + |\nabla u|^2} dz$ on $\Sigma_4$.
By convexity of $\Phi$, we get that
$$
	\Phi(\nabla u) \ge \Phi(\nabla \phi) + \langle \nabla\Phi(\nabla\phi)\,,\nabla u - \nabla\phi \rangle_N \,,
$$
and by \eqref{(2)}, \eqref{Schwarz}, and Young's inequality, we obtain
\begin{multline}
\label{convex}
	\frac{2u}{\phi}\Phi\(\nabla \phi\)\langle \nabla \Phi(\nabla \phi)\,, \nabla u\rangle_N\le \frac{2u}{\phi}\Phi\(\nabla \phi\) \left [ \Phi(\nabla u)-\Phi(\nabla \phi)+
	\langle \nabla\Phi(\nabla\phi)\,,\nabla\phi\rangle_N \right ]\\
	=\frac{2u}{\phi}\Phi \(\nabla \phi \)\, \Phi(\nabla u)\le \frac {u^2}{\phi^2}\Phi\( \nabla \phi \)^2+\Phi(\nabla u)^2\,.
\end{multline}
Finally, collecting \eqref{div_s4} and \eqref{convex} we deduce
\begin{equation}
\label{div_s4_estimate}
	- \int_{\Sigma_4}\langle {\bf F}\,, \nu_4\rangle_{N+1}d{\mathcal H}^N \le \int_{r<\Phi^0(z)<R} \( \Phi(\nabla u)^2  - \frac {(\beta-2)^2}4 \frac {u^2(z)}{[\Phi^0(z)]^2} \)dz\,.
\end{equation}
Collecting \eqref{div_sum}, \eqref{div_s1}, \eqref{div_s2}, \eqref{div_s3_estimate}, and \eqref{div_s4_estimate}, 
we obtain
\begin{multline}
\label{last}
	\int_{r<\Phi^0(z)<R} \( \Phi(\nabla u)^2 - \frac {(\beta-2)^2}4 \frac {u^2(z)}{[\Phi^0(z)]^2} \)dz\\
	\ge K(N,\beta) \int_{r<\Phi^0(x,0)<R} \frac{u^2(x,0) }{H^0(x)} dx+ O(r^{N-2})\,.
\end{multline}
Letting $r$ go to zero and $R$ go to infinity, we prove the inequality \eqref{Finsler-Kato}.

%
%
To prove the optimality of the constant that appears in (\ref{constant}),
repeat all the previous arguments on replacing $u$ by $\phi$. 
In such a case both inequalities \eqref{convex} and \eqref{div_s4_estimate} hold as equality. 
Moreover, since $\phi $ is not compactly supported in $B_{\Phi^0}(R)$, the extra $N$-dimensional surface has to be considered 
\begin{equation*}
	\Sigma_5=\left \{(z,h)\in\re^N_{+}\times\R : \Phi^0(z)= R\,, \quad 0\le  h\le \phi(z) \right \}\,.
\end{equation*}
The unit normal $\nu_5$ is given by $\nu_5=-\nu_3$,
so that, by \eqref{div_s3} and \eqref{Eq:Phi}, instead of \eqref{last}, we obtain
\begin{multline}
\label{last2}
	\int_{r<\Phi^0(z)<R} \( \Phi^2(\nabla \phi) -\frac{(\beta-2)^2}{4} \frac{\phi^2(z)}{[\Phi^0(x,t)]^2} \)dz \\
	= K(N,\beta) \int_{r<\Phi^0(x,0)<R} \frac{\phi^2(x,0)}{H^0(x)}dx - \frac{1}{|\nabla \Phi^0|} \frac {N-2}{2r}\int_{\Sigma_3} h d{\mathcal H}^N
	+ \frac{1}{|\nabla \Phi^0|}\frac {N-2}{2R} \int_{\Sigma_5} h d{\mathcal H}^N \,.
\end{multline}
It is easy to check that the last two integrals in (\ref{last2}) are equal. 
Indeed, by spherical coordinates, if $B_{\Phi^0}^+=B_{\Phi^0}\cap \R_+$, recalling \eqref{phi_form}, we have
\begin{align}
\label{uni}
	\frac 1 r \int_{\Sigma_3} h d{\mathcal H}^N 
	&=\frac 1 r \int_{\partial B_{\Phi^0}^+}r^{N-1}\( \int_0^{\phi(rx', rt')}h dh \)d{\mathcal H}^{N-1} \notag \\
	&=\frac{r^{N-1}}{2r} \int_{\partial B_{\Phi^0}^+}\phi^2(rx',rt')d{\mathcal H}^{N-1}  \notag \\ 
	&= \frac{r^{N-2}}2 \int_{\partial B_{\Phi^0}^+}r^{-N+2}f^2(\theta)d{\mathcal H}^{N-1} =\frac 1 R \int_{\Sigma_5} h d{\mathcal H}^N.
\end{align}
Collecting (\ref{last2}) and (\ref{uni}) we deduce 
\[
	\lim_{R \to \infty}\,\lim_{r \to  0} \frac{\int_{r<\Phi^0(z)<R} \Phi^2(\nabla \phi)(z) dz}{\int_{r<\Phi^0(x,0)<R} \frac{\phi^2(x,0)}{H^0(x)} dx} = K(N,\beta),
\]
which shows the optimality of the constant.
\qed

%
%
\section{Finsler Hardy-Kato's inequality in cones}
\label{section:cone}

In this section, we give a proof of Theorem \ref{Theorem:cone}. 
Let us consider the following open cone 
\[
	C_{\alpha} = \{(x,t) \in \re^{N-1} \times \re \,:\, t > (\tan \alpha) H^0(x) \}
\]
in $\re^N$ for some $\alpha\in (-\frac{\pi}{2}, \frac{\pi}{2})$.
Note that the unit outer normal vector on the $(N-1)$-dimensional surface
\[
	\pd C_{\alpha} = \{(x,t) \in \re^{N-1} \times \re \,:\, t = (\tan \alpha) H^0(x) \}
\]
is given by
\begin{equation}
\label{nu}
	\nu^\alpha(x, t) = \frac{1}{\sqrt{1 + \tan^2 \alpha |\nabla H^0(x)|^2}} ( (\tan \alpha) \nabla H^0(x), -1) \in \re^{N-1} \times \re,
\end{equation}
and the area element $d\sigma_{x,t}$ on $\pd C_{\al}$ is defined by \eqref{area}.

We repeat the same arguments used in \S \ref{section:extremal} and we look for solutions $\phi_{\al, \beta}$ to the problem
\begin{equation}
\label{Eq:cone}
	\begin{cases}
	\Delta_{\Phi} \phi_{\alpha, \beta}(x,t) + \disp{\frac{(\beta-2)^2}{4} \frac{\phi_{\alpha, \beta}(x,t)}{[\Phi^0(x,t)] ^2}} = 0 
	& \textrm{in} \, C_{\alpha},\\
	\phi_{\alpha, \beta} = [\Phi^0(x,t)] ^{-\frac{N}{2}+1} & \textrm {on} \, \pd C_{\alpha}
	\end{cases}
\end{equation}
of the form
\[
	\phi_{\alpha, \beta}(x,t) = \frac{1}{\( (H^0(x))^2 + t^2 \)^{\frac{N-2}{4}}}w_{\alpha, \beta} \( \frac{t^2}{(H^0(x))^2 + t^2} \),
\]
where $\Phi^0$, $\Phi$ are defined by \eqref{Phi0}, \eqref{Phi} respectively.
Then $w_{\alpha, \beta}(\sin^2 \theta) = g_{\al, \beta}(\theta)$ and $g_{\alpha, \beta}$ solves the problem
\[
	\begin{cases}
	g''(\theta)-(N-2) (\tan \theta) g'(\theta)- \(\frac {(N-2)^2}{4}- \frac{(\beta-2)^2}{4} \) g(\theta) = 0 & \theta \in (\alpha,\frac{\pi}{2}), \\
	g(\alpha) = 1, \quad \lim_{\theta \to \frac{\pi}{2}} g(\theta) \in \re.
	\end{cases}
\]
Thus $w_{\al, \beta}$ is described by using the hypergeometric function
\begin{equation}
\label{w_ab}
	w_{\alpha, \beta}(y) = c_1 F \(\frac{N+\beta}{4}-1,\frac{N-\beta}{4},\frac{1}{2};y \) 
	+ c_2 \sqrt{y} F \(\frac{N+\beta}{4} - \frac {1}{2}, \frac{N-\beta}{4}+\frac{1}{2},\frac{3}{2};y \)
\end{equation}
for suitable choice of constants $c_1$ and $c_2$.
The constants $c_1,c_2$ have to satisfy 
\begin{equation}
\label{limw1}
	w_{\al,\beta}(\sin^2\alpha)= 1, \quad  \lim_{\theta \to \frac{\pi}{2}}w_{\al, \beta}(\sin^2\theta) \in \R.
\end{equation}
By the first condition in \eqref{limw1}, we get
\begin{multline}
\label{limw2}
	c_1 F \( \frac{N+\beta}{4}-1,\frac{N-\beta}{4},\frac{1}{2};\sin^2\alpha \) \\
	+ c_2 |\sin\alpha| F \( \frac{N+\beta}{4}-\frac {1}{2},\frac{N-\beta}{4} + \frac{1}{2},\frac{3}{2};\sin^2\alpha \)=1.
\end{multline}
For the second condition in \eqref{limw1}, we have
\begin{equation}
\label{c22}
	c_1 \frac{\Gamma \( \frac{1}{2} \) \Gamma \(\frac{N-3}{2} \)}{\Gamma \( \frac{N-\beta}{4} -1 \) \Gamma \(\frac{N+\beta}{4} \)}
	+ c_2 \frac{\Gamma \( \frac{3}{2} \) \Gamma \(\frac{N-3}{2} \)}{\Gamma \( \frac{N+\beta}{4} -\frac{1}{2}\) \Gamma \(\frac{N-\beta}{4} + \frac{1}{2} \)}=0.
\end{equation}
Collecting \eqref{limw2} and \eqref{c22}, if we put
\begin{multline}
\label{A_ab}
	A_{\alpha, \beta} = F\(\frac{N+\beta}{4}-1,\frac{N-\beta}{4},\frac{1}{2};\sin^2\alpha \) \\
	- K(N,\beta)|\sin\alpha| F\(\frac{N+\beta}{4}-\frac {1}{2},\frac{N-\beta}{4}+\frac {1}{2},\frac{3}{2};\sin^2\alpha \),
\end{multline}
we get
\begin{equation}
\label{c1c2}
	c_1 = \frac{1}{A_{\alpha, \beta}}, \quad c_2 = -\frac{K(N,\beta)}{A_{\alpha, \beta}}.
\end{equation}
Therefore, if $w$ is as in \eqref{w_form} with $k = -K(N, \beta)$ and $\phi$ is defined in \eqref{extremal}, 
we obtain $w_{\alpha, \beta}(y) = \frac{1}{A_{\alpha, \beta}} w(y)$ and that
\begin{equation}
\label{phi_ab}
	\phi_{\alpha, \beta}(x,t) = \frac 1{ A_{\alpha, \beta}} \phi(x,t)
\end{equation}
is a solution to \eqref{Eq:cone}.

Let us check what happens in the proof of Theorem \ref{Theorem:main} when we work on $C_{\alpha}$. 
We start by defining the vector field $\F_{\al, \beta}$ by replacing $\phi$ with $\phi_{\al,\beta}$ in the definition \eqref{F}.
By \eqref{phi_ab}, we have $\F_{\al, \beta} \equiv \F$ where $\F$ is defined by using $\phi$ in \eqref{extremal}.
Also, instead of the surface defined in \eqref{s1}-\eqref{s4} we deal with the following $N$-dimensional hypersurfaces in $\re^{N+1}$:
\begin{align*}
	&\Sigma_{C_\alpha,1}=\left \{(z,0)\in C_{\alpha} \times \R : z\in B_{\Phi^0}(R) \setminus B_{\Phi^0}(r) \right \}, \\
	&\Sigma_{C_\alpha,2}=\left \{((x, t), h) \in \partial C_{\alpha} \times \R : r < \Phi^0(x,t)< R, \, 0 \le  h \le u(x,t), \, t = (\tan \al) H^0(x) \right \}, \\
	&\Sigma_{C_\alpha,3}=\left \{(z,h)\in C_{\alpha} \times \R : \Phi^0(z)= r, \, 0 \le  h\le u(z) \right \}, \\
	&\Sigma_{C_\alpha,4}=\left \{(z,h)\in C_{\alpha} \times \R : z\in B_{\Phi^0}(R) \setminus B_{\Phi^0}(r), \, h= u(z) \right \}.
\end{align*}
The unit outer normal vector on $\Sigma_{C_\alpha,2}$ is given by $\nu_2(x, t) =(\nu^\alpha (x,t),0)\in \R^{N+1}$ where $\nu^{\al}$ is defined in \eqref{nu}.
Thus
\begin{align*}
	&\langle \F_{\al, \beta}, \nu_2 \rangle_{N+1} \Big|_{(x, t, h) \in \Sigma_{C_\al,2}} 
	= \langle \( \frac{2h}{\phi_{\al, \beta}} H(\nabla_x\phi_{\al, \beta})(\nabla H)(\nabla_x\phi_{\al, \beta}), \, \frac{2h}{\phi_{\al, \beta}} (\phi_{\al, \beta})_t \), \, \nu^{\al} \rangle_N \Big|_{(x,t) \in \pd C_{\al}} \\
	&= \frac{1}{\sqrt{1 + (\tan^2 \alpha) |\nabla H^0(x)|^2}} \frac{2h}{\phi_{\al, \beta}} \left\{ (\tan \alpha) H(\nabla_x\phi_{\al, \beta})(\nabla H)(\nabla_x\phi) \cdot \nabla H^0(x) - (\phi_{\al, \beta})_t \right\}.
\end{align*}
Note that $d{\mathcal H}^N = d\sigma_{x,t} dh = \sqrt{1 + (\tan^2 \alpha) |\nabla H^0(x)|^2} dx dh$ on $\Sigma_{C_\alpha,2}$.
Thus noting the cancellation of the term $\sqrt{1 + (\tan^2 \alpha) |\nabla H^0(x)|^2}$, we see
\begin{align*}
	&\int_{\Sigma_{C_{\alpha},2}}\langle \F_{\al, \beta}, \nu_2 \rangle_{N+1} d{\mathcal H}^N \\ 
	&= \int_0^{u(x, (\tan \al)H^0(x))} 2h \times \\
	&\int_{\{ x \, : \, r<\Phi^0(x, (\tan \al)H^0(x))<R \}} 
	\frac{1}{\phi_{\al, \beta}} \left\{ (\tan \alpha) H(\nabla_x\phi_{\al, \beta})(\nabla H)(\nabla_x\phi_{\al, \beta}) \cdot \nabla H^0(x) - (\phi_{\al, \beta})_t \right\} dxdh \\
	&= \int_{\{ x \, : \, r<\Phi^0(x,(\tan \al) H^0(x))<R \}} \frac{u^2(x,(\tan \al) H^0(x))}{\phi_{\al, \beta}(x,(\tan \al) H^0(x))} \times \\ 
	&\hspace{14em} \left\{ (\tan \alpha) H(\nabla_x\phi_{\al, \beta})(\nabla H)(\nabla_x\phi_{\al, \beta}) \cdot \nabla H^0(x) - (\phi_{\al, \beta})_t \right\} dx.
\end{align*}
Now, we compute 
\[
	\frac{1}{\phi_{\al,\beta}} \left\{ (\tan \alpha) H(\nabla_x\phi_{\al,\beta})(\nabla H)(\nabla_x\phi_{\al,\beta}) \cdot \nabla H^0(x) - (\phi_{\al,\beta})_t \right\}
\]
on $\pd C_{\al}$.
Since $\nabla_x \phi_{\al, \beta}(x,t) = A(x,t) \nabla H^0(x)$ where
\begin{align*}
	A(x,t) &= (\Phi^0(x,t))^{\frac{-N-2}{2}} \(\frac{2-N}{2}\) w_{\al, \beta}\(\frac{t^2}{(H^0(x))^2 + t^2} \) H^0(x) \\
	&+ (\Phi^0(x,t))^{-\frac{N-6}{2}} w_{\al, \beta}^{\prime}\(\frac{t^2}{(H^0(x))^2 + t^2}\) (-2t^2 H^0(x)),
\end{align*}
we check that
\[
	H(\nabla_x\phi_{\al, \beta})(\nabla H)(\nabla_x\phi_{\al, \beta}) \cdot \nabla H^0(x) = A(x,t).
\]
Also we have $\frac{t^2}{(H^0(x))^2 + t^2} = \sin^2 \alpha$ on the surface $\pd C_{\alpha}$.
Thus since $t = (\tan \al) H^0(x)$ and $\Phi^0(x,t) = \sqrt{1 + \tan^2 \al} H^0(x)$ on $\pd C_{\al}$, 
we have
\begin{align*}
	&\frac{1}{\phi_{\al, \beta}} \left\{ (\tan \alpha) H(\nabla_x\phi_{\al, \beta})(\nabla H)(\nabla_x\phi_{\al, \beta}) \cdot \nabla H^0(x) - (\phi_{\al, \beta})_t \right\} 
	= \frac{1}{\phi_{\al, \beta}} \left\{ (\tan \alpha) A(x,t) - (\phi_{\al, \beta})_t \right\} \\
	&= \frac{1}{\phi_{\al, \beta}} \left\{ (\Phi^0(x,t))^{\frac{-N-2}{2}} \( \frac{2-N}{2} \) w_{\al, \beta}(\sin^2 \alpha) (\tan \al) H^0(x) \right. \\ 
	&\left. \hspace{2em} + (\Phi^0(x,t))^{\frac{-N-6}{2}} w_{\al, \beta}^{\prime}(\sin^2 \alpha) (-2t^2 (\tan \al) H^0(x)) \right. \\
	&\left. \hspace{2em} - \( \frac{2-N}{2} \) (\Phi^0(x,t))^{\frac{-N-2}{2}} w_{\al, \beta} (\sin^2 \alpha) t 
	- (\Phi^0(x,t))^{\frac{-N-6}{2}} w_{\al, \beta}^{\prime}(\sin^2 \alpha) (2t(H^0(x))^2) \right\} \\
	& = \frac{K(N, \al, \beta)}{\Phi^0(x,t)} \sqrt{1 + \tan^2 \al}
\end{align*}
on $\pd C_{\al}$, where we put
\[
	K(N, \al, \beta) = -(\sin 2\al) w_{\al, \beta}^{\prime}(\sin^2 \alpha) = -(\sin 2\al) \frac{w^{\prime}(\sin^2 \alpha)}{A_{\al, \beta}}.
\]
Summarizing, we have
\begin{align}
\label{div_S2}
	&\int_{\Sigma_{C_{\alpha},2}}\langle \F_{\al, \beta}, \nu_2 \rangle_{N+1} d{\mathcal H}^N \\
	&= K(N, \al, \beta) \sqrt{1 + \tan^2 \al} \int_{\{ x \in \re^{N-1} \, : \, r<\Phi^0(x,(\tan \al) H^0(x))<R \}} 
	\frac{u^2(x,(\tan \al) H^0(x))}{\Phi^0(x,(\tan \al) H^0(x))} dx \notag \\ 
	&= K(N, \al, \beta) \sqrt{1 + \tan^2 \al} 
	\int_{\{ (x, t) \in \pd C_{\al} \, : \, r < \Phi^0(x,t) < R \}} \frac{1}{\sqrt{1 + (\tan^2 \al)|\nabla H^0(x)|^2}} \frac{u^2(x,t)}{\Phi^0(x,t)} d\sigma_{x,t}. \notag
\end{align}

On the other hand, since $\F_{\al, \beta} = \F$, we obtain the estimates
\begin{align}
\label{div_S1}
	&\int_{\Sigma_{C_{\alpha},1}}\langle \F_{\al, \beta}, \nu_1 \rangle_{N+1} d{\mathcal H}^N  = 0, \\
\label{div_S3}
	&\int_{\Sigma_{C_{\alpha},3}}\langle \F_{\al, \beta}, \nu_3 \rangle_{N+1} d{\mathcal H}^N  = O(r^{N-2}), \quad (r \to 0) \\
\label{div_S4}
	&-\int_{\Sigma_{C_{\alpha},4}}\langle \F_{\al, \beta}, \nu_4 \rangle_{N+1} d{\mathcal H}^N 
	\le
	\int_{z \in C_{\al}, r<\Phi^0(z)<R} \( \Phi(\nabla u)^2  - \frac {(\beta-2)^2}4 \frac {u^2(z)}{[\Phi^0(z)]^2} \)dz
\end{align}
as in the case when $\al = 0$, where $\nu_i$ denotes the outer unit normal of $\Sigma_{C_{\alpha},i}$ $(i=1,3,4)$.
Collecting \eqref{div_S2}, \eqref{div_S1}, \eqref{div_S3}, \eqref{div_S4} and
\[
	\sum_{i=1}^4 \int_{\Sigma_{C_{\al},i}} \langle \F_{\al, \beta} \,, \nu_i \rangle_{N+1} d\mathcal H^N=0,
\] 
we obtain the conclusion as in \S \ref{section:proof}.
\qed

\end{document}